\begin{document}
\newtheorem{thm}{Theorem}[section]
\newtheorem*{thm*}{Theorem}%[section]
\newtheorem{lem}[thm]{Lemma}
\newtheorem{prop}[thm]{Proposition}
\newtheorem{cor}[thm]{Corollary}
\newtheorem*{conj}{Conjecture}%[section]
\newtheorem{proj}[thm]{Project}%[section]

\theoremstyle{definition}
\newtheorem*{defn}{Definition}
\newtheorem*{remark}{Remark}
\newtheorem{exercise}{Exercise}
\newtheorem*{exercise*}{Exercise}

\numberwithin{equation}{section}

\newcommand{\rad}{\operatorname{rad}}

\def\xo{x_{1}}
\def\yo{x_{2}}
\def\xt{x_{3}}
\def\yt{x_{4}}

\def\no{n_{1}}
\def\nt{n_{2}}

\def\etao{\eta}%{\eta_1}
\def\etat{2\eta}%{\eta_2}
\def\vx{\mathbf{x}}

\global\long\def\xo{\mathrm{x_{1}}}%
\global\long\def\xt{\mathrm{x_{3}}}%
\global\long\def\yo{\mathrm{x}_{2}}%
\global\long\def\yt{\mathrm{x_{4}}}%

\global\long\def\xbo{\mathrm{\XBox_{1}}}%
\global\long\def\xbt{\mathrm{\XBox_{3}}}%
\global\long\def\ybo{\mathrm{\XBox}_{2}}%
\global\long\def\ybt{\mathrm{\XBox_{4}}}%

\newcommand{\Z}{{\mathbb Z}} %cph changed from \mathbf
\newcommand{\Q}{{\mathbb Q}}
\newcommand{\R}{{\mathbb R}}
\newcommand{\C}{{\mathbb C}}
\newcommand{\N}{{\mathbb N}}
\newcommand{\FF}{{\mathbb F}}
\newcommand{\fq}{\mathbb{F}_q}
\newcommand{\rmk}[1]{\footnote{{\bf Comment:} #1}}

\renewcommand{\mod}{\;\operatorname{mod}}
\newcommand{\ord}{\operatorname{ord}}
\newcommand{\TT}{\mathbb{T}}
\renewcommand{\i}{{\mathrm{i}}}
\renewcommand{\d}{{\mathrm{d}}}
\renewcommand{\^}{\widehat}
\newcommand{\HH}{\mathbb H}
\newcommand{\Vol}{\operatorname{vol}}
\newcommand{\area}{\operatorname{area}}
\newcommand{\tr}{\operatorname{tr}}
\newcommand{\norm}{\mathcal N} % norm =(\frac{ n+\sqrt{n^2-4}} 2)^2
\newcommand{\intinf}{\int_{-\infty}^\infty}
\newcommand{\ave}[1]{\left\langle#1\right\rangle} %  average
\newcommand{\Var}{\operatorname{Var}}
\newcommand{\Prob}{\operatorname{Prob}}
\newcommand{\sym}{\operatorname{Sym}}
\newcommand{\disc}{\operatorname{disc}}
\newcommand{\CA}{{\mathcal C}_A}
\newcommand{\cond}{\operatorname{cond}} % conductor
\newcommand{\lcm}{\operatorname{lcm}}
\newcommand{\Kl}{\operatorname{Kl}} %Kloosterman sum
\newcommand{\leg}[2]{\left( \frac{#1}{#2} \right)}  % Legendre symbol
\newcommand{\Li}{\operatorname{Li}}

\newcommand{\sumstar}{\sideset \and^{*} \to \sum}

\newcommand{\LL}{\mathcal L} %L-function of u
\newcommand{\sumf}{\sum^\flat}
\newcommand{\Hgev}{\mathcal H_{2g+2,q}}
\newcommand{\USp}{\operatorname{USp}}
\newcommand{\conv}{*}
\newcommand{\dist} {\operatorname{dist}}
\newcommand{\CF}{c_0} % Fejer constant
\newcommand{\kerp}{\mathcal K}

\newcommand{\Cov}{\operatorname{cov}}
\newcommand{\Sym}{\operatorname{Sym}}

\newcommand{\Ht}{\operatorname{Ht}}

\newcommand{\E}{\operatorname{E}} % expectation
\newcommand{\sign}{\operatorname{sign}} %sign
\newcommand{\meas}{\operatorname{meas}} %sign

\newcommand{\divid}{d} % the divisor function

\newcommand{\GL}{\operatorname{GL}}
\newcommand{\SL}{\operatorname{SL}}
\newcommand{\re}{\operatorname{Re}}
\newcommand{\im}{\operatorname{Im}}
\newcommand{\res}{\operatorname{Res}}
\newcommand{\Str}{\mathcal S^{\rm trunc}} 
 \newcommand{\length}{\operatorname{length}}
 
\newcounter{ToDo}
\newcommand{\todo}[1]{\stepcounter{ToDo}%
	({\color{blue}TODO~\arabic{ToDo}: {#1}})%
}

\title[Pair correlation of lacunary sequences]{The metric theory of the pair correlation function of real-valued lacunary sequences}
\author{Ze\'ev Rudnick and Niclas Technau}
\date{\today}
 \address{School of Mathematical Sciences, Tel Aviv University, Tel Aviv 69978, Israel}
 
\email{rudnick@tauex.tau.ac.il}
\email{niclast@mail.tau.ac.il}
\thanks{This result is part of a project that received funding from the European Research Council (ERC) under the European Union's  Horizon 2020 research and innovation programme  (Grant agreement No.    786758).}
\subjclass[2010]{11J54; 11J71}

\keywords{Pair correlation; Poisson statistics; lacunary sequence}
\begin{abstract}
Let $\{ a(x) \}_{x=1}^{\infty}$ be a positive, real-valued, 
lacunary sequence.
This note shows that the pair correlation function 
of the fractional parts of the dilations $\alpha a(x)$
is Poissonian for Lebesgue almost every $\alpha\in \mathbb{R}$. 
By using harmonic analysis, our result 
--- irrespective of the choice of the real-valued sequence 
$\{ a(x) \}_{x=1}^{\infty}$ ---
can essentially be reduced to showing that
the number of solutions to the Diophantine inequality 
$$
\vert n_1 (a(x_1)-a(y_1))- n_2(a(x_2)-a(y_2)) \vert < 1 
$$
in integer six-tuples $(n_1,n_2,x_1,x_2,y_1,y_2)$ located in the box 
$[-N,N]^6$ with the ``excluded diagonals'', that is 
$$x_1\neq y_1, \quad x_2 \neq y_2, \quad (n_1,n_2)\neq (0,0),$$ 
is at most $N^{4-\delta}$ for some fixed $\delta>0$, 
for all sufficiently large $N$. 

\end{abstract}
\maketitle

\section{Introduction}
A sequence of points $\{\theta_n\}_{n=1}^\infty$  
is uniformly distributed modulo one if given any fixed interval  $I$ in the unit circle $\R/\Z$, 
 the proportion of fractional parts $\theta_n \bmod 1$ which lie in $I$ tends to the length of the interval $I$, that is 
 \begin{equation*}%\label{unif dist} 
 \#\{n\leq N: \theta_n\bmod 1 \in I\} \sim  \length(I) \cdot N, \quad N\to \infty .
 \end{equation*}
 
 We study the {\em pair correlation function}   $R_2$, defined for every fixed interval $I\subset \R$ by the property that
\[
\lim_{N\to \infty} \frac 1N \#\{ 1\leq m\neq n\leq N: |\theta_n-\theta_m| \in \frac 1N I \} = \int_I R_2(x)dx
 \]
assuming that the limit exists. For a random sequence of $N$ elements, that is $N$ uniform independent random variables in $[0,1)$ (the Poisson model), the limiting pair correlation function is  $R_2(x)\equiv 1$.

There are very few positive results on the pair correlation function available for specific sequences,  a notable exception being the fractional parts of $\sqrt{n}$ \cite{EMV}; 
a more tractable problem is to randomize (a ``metric'' theory, in the terminology of uniform distribution theory) by looking at 
random multiples\footnote{A different notion of randomizing has recently been investigated in \cite{AB}, which studies the pair correlation of the sequence $\alpha^n \bmod 1$ with $\alpha$ random.}
$\theta_n=\alpha a(n) \bmod 1$, for almost all $\alpha$. 
 There is a well-developed metric theory of the pair correlation function for  {\em integer} valued sequences $\{a(n)\}_{n=1}^\infty$, initiated in  \cite{RudnickSarnak}, where polynomial sequences such as $a(n)=n^d$, $d\geq 2$, are studied,  
 with several developments in the last few years, see e.g. \cite{ALL, BCC, CLZ, LT, LarcherStockinger, PS, RZ pc, Walker}.  
 In this note we study the case of real-valued lacunary sequences:  
Let $a(x)>0$ be a lacunary sequence of positive reals, that is there is some $C>1 $ so that for all integers $x\geq 1$, 
\[
a(x+1)\geq C a(x).
\] 
For instance, we can take $a(x) = e^x$. It is known  that for almost all $\alpha$, the sequence $\alpha a(x)\bmod 1$ is uniformly distributed mod one \cite[Chapter 1, Corollary 4.3]{KN}. 
Here and throughout this note ``almost all'' is meant
with respect to the Lebesgue measure on $\R$.

\begin{thm}\label{thm:lacunary} 
Assume that $\{a(x)\}_{x=1}^\infty$
is a lacunary sequence of positive reals. 
Then the pair correlation function of 
the sequence $\{\alpha a(x)\}_{x=1}^\infty$ 
is Poissonian for almost all $\alpha$. 
\end{thm}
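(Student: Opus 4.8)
The plan is to pass from the pair correlation statistic to its second moment and apply harmonic analysis, reducing everything to the Diophantine counting bound advertised in the abstract. Let me sketch this in detail.

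**Setting up the variance computation.** First I would fix an interval $I$ and define the counting function
\[
R_2(N,I,\alpha) = \frac{1}{N} \#\Bigl\{ 1 \leq m \neq n \leq N : \|\alpha(a(m)-a(n))\| \in \tfrac{1}{N} I \Bigr\},
\]
where $\|\cdot\|$ denotes distance to the nearest integer. The goal is to show $R_2(N,I,\alpha) \to \length(I)$ for almost all $\alpha$. The standard metric-theory strategy is a second-moment (variance) argument: I would smooth the indicator of the interval $\frac{1}{N}I$ by a suitable test function, compute the expectation $\int R_2(N,I,\alpha)\,d\alpha$ over a fixed bounded $\alpha$-window, verify it converges to $\length(I)$, and then bound the variance $\int |R_2(N,I,\alpha) - \length(I)|^2\,d\alpha$ by something summable (or summable along a subsequence $N_k$). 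If the variance is $O(N^{-\delta})$, Borel--Cantelli along a polynomially spaced subsequence together with a monotonicity/interpolation argument to fill in the gaps yields almost-everywhere convergence for the full sequence.

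**Opening up the variance via Fourier analysis.** The second moment involves a product of two of these counting functions, hence a sum over two ``pairs'' of indices; expanding the smoothed indicator in a Fourier series on $\R/\Z$ introduces integer frequencies $n_1, n_2$. Integrating in $\alpha$ against the resulting exponentials $e(\alpha(n_1(a(x_1)-a(y_1)) - n_2(a(x_2)-a(y_2))))$ — where $e(t)=e^{2\pi i t}$ — produces, for each frequency and index tuple, an oscillatory integral over the $\alpha$-window. That integral is small unless the phase is nearly stationary, i.e. unless
\[
|n_1(a(x_1)-a(y_1)) - n_2(a(x_2)-a(y_2))| < 1.
\]
The contribution from the ``diagonal'' configurations ($x_1=y_1$, $x_2=y_2$, or $(n_1,n_2)=(0,0)$) reproduces the main term $\length(I)^2$ plus lower-order pieces, and the off-diagonal error is controlled by the number of solutions to exactly the Diophantine inequality in the theorem. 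This is precisely the reduction the abstract promises, and it is essentially formal: it holds \emph{irrespective} of the sequence $\{a(x)\}$, since only the frequency and index ranges enter the oscillatory-integral bound.

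**The arithmetic input and the main obstacle.** The crux is therefore the Diophantine count: showing that the number of six-tuples $(n_1,n_2,x_1,x_2,y_1,y_2)\in[-N,N]^6$ off the diagonal satisfying $|n_1(a(x_1)-a(y_1)) - n_2(a(x_2)-a(y_2))|<1$ is $O(N^{4-\delta})$. Here lacunarity must do the work. For a lacunary sequence the differences $a(x)-a(y)$ are extremely spread out on a multiplicative scale: the largest term dominates, so $a(x_1)-a(y_1) \asymp C^{x_1}$ and the constraint forces $n_1 C^{x_1} \approx n_2 C^{x_2}$, which ties the variables together tightly. I expect the argument to proceed by ordering the variables by size and showing that once the dominant term on each side is fixed, the inequality determines the remaining variables up to $O(1)$ or $O(N^{\epsilon})$ choices, because the geometric growth prevents many near-coincidences. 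The genuinely hard part will be making this robust: the sequence is only assumed lacunary, not equal to $C^x$, so I cannot use exact algebraic identities and must instead exploit the gap condition quantitatively. Carefully handling the cases where $n_1$ or $n_2$ is large (up to $N$) while the $a$-differences are correspondingly small, and ruling out an accumulation of near-solutions, is where the factor $N^{-\delta}$ saving has to be extracted. Once that count is established, the variance bound and Borel--Cantelli conclusion follow routinely, completing the proof.
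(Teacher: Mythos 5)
Your reduction is exactly the paper's: smooth the pair correlation count, compute mean and variance over a compact $\alpha$-window, expand in Fourier modes so that the variance is controlled by the number of near-solutions of $|n_1(a(x_1)-a(y_1))-n_2(a(x_2)-a(y_2))|<K$, then conclude by Borel--Cantelli along a polynomially spaced subsequence plus a monotonicity argument to fill the gaps (the paper's Lemma~\ref{lem: conv subsequences}). One point you gloss over: for a \emph{real-valued} sequence even the expected value is not automatic, since the nonzero frequencies no longer integrate to zero exactly; besides the six-variable count one needs the three-variable bound $\#\{n\leq M,\ x\neq y\leq N: n|a(x)-a(y)|<K\}\ll N^{2-\delta}$ (the paper's condition \eqref{cond expec}). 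For lacunary sequences this is easy (lacunarity forces $x\ll\log N$ and $n<K$), but it is a separate input your expectation step silently requires.

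The genuine gap is that the arithmetic heart of the theorem --- the counting bound itself --- is never proved: you state the correct intuition (geometric growth prevents near-coincidences) and then explicitly defer ``making this robust,'' which is precisely where all the work of the paper's Proposition~\ref{prop:counting} lies. Concretely, that argument runs as follows: normalize $n_i>0$, $y_i<x_i$, $x_1\geq x_2$, discard the $O(M^2(\log N)^4)$ tuples with $x_1\ll\log N$, and fix $(n_1,x_1,y_1)$, which accounts for the factor $MN^2$. Then (i) $x_1-x_2\leq 2\log_C M$, because $n_1(a(x_1)-a(y_1))\geq(1-\tfrac1C)a(x_1)$ while $n_2(a(x_2)-a(y_2))\leq MC^{-(x_1-x_2)}a(x_1)$, so a larger gap makes the left side of the inequality $\gg a(x_1)\gg C^{x_1}\gg K$; this leaves $O(\log M)$ choices of $x_2$. (ii) With $x_2$ fixed, $n_2$ is determined by $y_2$, since $a(x_2)-a(y_2)\gg C^{x_2}\gg M^2\gg K$. (iii) Among $y_2$ with $x_2-y_2>2\log_C M$ there is essentially one admissible choice (any two force $n_2'=n_2+O(K/M)$, hence $n_2'=n_2$), and the near range contributes another $O(\log M)$ factor, for a total of $\ll MN^2(\log M)^2\ll N^{3+2\varepsilon}$, comfortably inside $N^{4-\delta}$. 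Note also that your heuristic $a(x_1)-a(y_1)\asymp C^{x_1}$ is unavailable: lacunarity gives only the one-sided bound $a(x_1)\gg C^{x_1}$ (the sequence may grow far faster, e.g.\ $a(x)=e^{x^2}$), so your step ``the constraint forces $n_1C^{x_1}\approx n_2C^{x_2}$'' fails as stated; the usable information is the ratio bound $a(x_2)/a(x_1)\leq C^{-(x_1-x_2)}$, which is exactly how steps (i)--(iii) above are phrased.
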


When $a(x)$ takes {\em integer} values,   \cite{RZ pc}  showed that for almost all $\alpha$, the pair correlation function is Poissonian. 
The case of  pair correlation of sequences of {\em rationals} $x_n=a_n/b_n$ with $a_n$ integer-valued and lacunary and $b_n$ integer-valued and (roughly speaking) sufficiently small (e.g.  $a_n/b_n=2014^n/[\log\log n]$) was treated in \cite{CLZ}.   Here we treat any real-valued sequences.

We will reduce the problem to giving a bound for the number of lattice points satisfying a Diophantine inequality: 
Let 
$$M=N^{1+\varepsilon}, \quad K=N^\varepsilon
$$ 
 and assume that there is some $\delta>0$ so that for all $\varepsilon>0$ sufficiently small
\begin{equation}\label{cond expec} \tag{A}
\#\{1\leq n\leq M, 1\leq x\neq y\leq N: n|a(x)-a(y)|<K \} 
\ll N^{2-\delta}.
\end{equation}
Let $\mathcal S(N)$ be the set of integer six-tuples with 
\[
1\leq y_i\neq x_i\leq N, \quad 1\leq |n_i|\leq M, \quad (i=1,2),
\]
satisfying 
\[
| n_1(a(x_1)-a(y_1))- n_2(a(x_2)-a(y_2)) |< K.
\]
Assume that 
\begin{equation}\label{lacunary ineq}\tag{B}
\#\mathcal S(N) \ll  N^{4-\delta}.
\end{equation}

\begin{thm}\label{prop:Reduce pc to ineq}
Let $\{a(x)\}_{x=1}^\infty$ be a sequence of distinct 
positive reals. 
Assume that  \eqref{cond expec} and \eqref{lacunary ineq} hold 
for some $\delta>0$.    
Then the pair correlation function of $\alpha a(x)$ 
is Poissonian 
for Lebesgue almost all $\alpha\in \mathbb{R}$.
\end{thm}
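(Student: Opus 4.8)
The plan is to run the standard second-moment (variance) method of the metric theory of pair correlation, arranging matters so that the two hypotheses \eqref{cond expec} and \eqref{lacunary ineq} become, respectively, the bound for the expectation and the bound for the variance of a smoothed pair-correlation sum. Fix a bounded interval $A\subset\R$ and a nonnegative smooth weight $w$ with $\int_\R w=1$ supported near $A$; it suffices to prove the Poissonian limit for almost every $\alpha\in A$ and then take a countable union of such $A$. Fix $I=[-s,s]$, write $\|t\|$ for the distance of $t$ to the nearest integer, and choose even Schwartz functions $f_\pm$ with $f_-\le\mathbf 1_I\le f_+$ and $\int f_\pm$ within $\eta$ of $2s$. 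Applying Poisson summation to the $N$-dilated periodization of $f=f_\pm$ gives, with $e(t)=e^{2\pi\i t}$,
\[
\frac1N\sum_{m\ne n}\sum_{k\in\Z}f\!\big(N(\alpha(a(m)-a(n))-k)\big)=\widehat f(0)\Big(1-\tfrac1N\Big)+E_N(\alpha),
\]
where $E_N(\alpha)=\frac1{N^2}\sum_{\ell\ne0}\widehat f(\ell/N)\sum_{m\ne n}e\big(\ell\alpha(a(m)-a(n))\big)$. The displayed main term tends to $\int f=\int f_\pm\in(2s-\eta,2s+\eta)$, so everything reduces to proving $E_N(\alpha)\to0$ for almost every $\alpha$.

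For the expectation I would integrate against $w$ and use $\int_\R e(\ell\alpha\Delta)w(\alpha)\,\d\alpha=\widehat w(\ell\Delta)$; the rapid decay of $\widehat f$ and $\widehat w$ lets me truncate to $1\le|\ell|\le M=N^{1+\varepsilon}$ and to pairs with $|\ell|\,|a(m)-a(n)|<K=N^\varepsilon$, up to a negligible error, the surviving factors being $O(1)$. Hence
\[
\Big|\int_\R E_N(\alpha)w(\alpha)\,\d\alpha\Big|\ll\frac1{N^2}\#\{1\le|\ell|\le M,\ 1\le m\ne n\le N:\ |\ell|\,|a(m)-a(n)|<K\}\ll N^{-\delta}
\]
by \eqref{cond expec}, so the expected smoothed pair correlation is already Poissonian.

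For the second moment, expanding $|E_N|^2$ and integrating in $\alpha$ replaces the $\alpha$-integral by $\widehat w(\beta)$ with $\beta=\ell_1(a(m_1)-a(n_1))-\ell_2(a(m_2)-a(n_2))$. The same decay estimates truncate to $1\le|\ell_i|\le M$, $m_i\ne n_i$ and $|\beta|<K$, and the remaining weights are $O(1)$; the surviving six-tuples are exactly those counted by $\mathcal S(N)$, whence
\[
\int_\R|E_N(\alpha)|^2w(\alpha)\,\d\alpha\ll\frac1{N^4}\,\#\mathcal S(N)\ll N^{-\delta}
\]
by \eqref{lacunary ineq}. By Chebyshev this forces concentration of the smoothed sum about its (Poissonian) mean.

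Choosing the sparse sequence $N_j=j^{\lceil 2/\delta\rceil}$ makes $\sum_j\int|E_{N_j}|^2w<\infty$, so Borel--Cantelli gives $E_{N_j}(\alpha)\to0$ for almost every $\alpha\in\mathrm{supp}\,w$. Since $N_{j+1}/N_j\to1$, for $N_j\le N<N_{j+1}$ I would sandwich the genuine counting function $\#\{m\ne n\le N:\|\alpha(a(m)-a(n))\|<s/N\}$ between its values at the two endpoints (comparing ranges $N_j,N_{j+1}$ with thresholds $s/N_{j+1},s/N_j$), using $f_\pm$ to pass between the smoothed and sharp counts; letting $\eta\to0$ recovers the exact Poisson value $2s$, and a countable union over intervals $A$ finishes the proof. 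The main obstacle I expect is the second-moment bookkeeping: verifying that the Schwartz tails truncate with a genuine power saving at the scales $M=N^{1+\varepsilon}$ and $K=N^\varepsilon$, and that every surviving tuple indeed lies in $\mathcal S(N)$ with bounded weight so that \eqref{lacunary ineq} applies cleanly; the interpolation across $[N_j,N_{j+1})$ is routine once $N_{j+1}/N_j\to1$.
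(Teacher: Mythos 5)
Your proposal is correct and follows essentially the same route as the paper: Poisson summation, truncation to $1\le|\ell|\le M=N^{1+\varepsilon}$ and to tuples with frequency combination below $K=N^{\varepsilon}$ via the rapid decay of $\widehat f$ and the weight's Fourier transform, hypothesis \eqref{cond expec} for the expectation and \eqref{lacunary ineq} for the second moment, a polynomially sparse subsequence $N_m\approx m^{2/\delta}$ with Chebyshev/Borel--Cantelli, and a monotone sandwich argument to interpolate between consecutive $N_m$ (the paper's Lemma~\ref{lem: conv subsequences}). The only organizational differences are that you bracket indicator functions by smooth majorants/minorants from the outset (the paper works with smooth $f$, diagonalizes over countably many $f$, and defers the indicator approximation to Lemma~\ref{lem: conv subsequences}), and that you take the second moment about the deterministic zero-mode term $\widehat f(0)(1-1/N)$ rather than splitting via Cauchy--Schwarz about the mean as in Proposition~\ref{prop:lacunary variance} --- which incidentally makes \eqref{cond expec} redundant in your formulation, since your expectation bound then follows from the second-moment bound by Cauchy--Schwarz.
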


In the case of integer-valued sequences, the almost sure convergence of the pair correlation function to the Poisson limit (metric Poisson pair correlation) follows \cite{RudnickSarnak, RZ pc} 
from  a similar bound for the equation 
$$
n_1 (a(x_1)-a(y_1))- n_2(a(x_2)-a(y_2))=0
$$ 
See  \cite{ALL, BCGW} for  a  streamlined criterion for metric Poisson pair correlation in terms of the {\em additive energy}  of the sequence. 

In \S~\ref{sec:lacunary}, we verify that that  \eqref{cond expec} and \eqref{lacunary ineq} hold for lacunary sequences, hence obtain Theorem~\ref{thm:lacunary}.

 \section{The pair correlation function}\label{sec:pc} 
 To study the pair correlation function, we use a smooth count cf. \cite{RudnickSarnak}: 
For $f\in C_c^\infty(\R)$ or $f$ being an indicator function
of a compact interval, set 
 \[
 F_N(x) = \sum_{j\in \Z} f\big(N(x+j)\big) %= \frac 1N\sum_{n\in \Z} \widehat f\Big(\frac nN\Big) e(nx)
 \]
 which is periodic and localized on scale $1/N$. 
 For a sequence $\{\theta_n\}_{n=1}^\infty\subset \R/\Z$, 
 we define its pair correlation function by
\begin{equation}
R_2(f,N)(\{\theta_n\}_{n=1}^\infty)= \frac{1}{N} 
\sum_{1\leq m\neq n\leq N} 
F_N(\theta_n-\theta_m).
\end{equation}  
In particular, for a fixed sequence $\{x_n\}_{n=1}^\infty$, 
we take $\theta_n=\alpha x_n\bmod 1$,  
and abbreviate the pair correlation function 
$R_2(f,N)(\{\theta_n\}_{n=1}^\infty)$, having fixed $f$,
by $R_2(f,N)(\alpha)=R_2(\alpha)$. 

It suffices to restrict $\alpha$ to lie in a fixed  finite interval  and to consider a smooth average: 
Let   $\rho\in C_c^\infty(\R)$, $\rho\geq 0$, be a smooth, compactly supported, non-negative weight function, normalized to give a probability density:   $ \int_{\R}\rho(\alpha)d\alpha=1$.  We define a smooth average 
 \begin{equation}\label{eq: smooth ave}
  \ave{X} = \int_{\R} X(\alpha)\rho(\alpha) \, \mathrm{d}\alpha . 
  \end{equation}  

 \subsection{The expected value}
\begin{lem}\label{lem:expectation} 
Let $M=N^{1+\varepsilon}$, $K=N^\varepsilon$ and assume
\eqref{cond expec} holds. 
Then the expected value of $R_2(f,N)(\alpha)$ is 
\[
\ave{R_2(f,N)} =\intinf f(x)dx +O(N^{-\delta}) 
\]
\end{lem}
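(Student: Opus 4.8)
The plan is to compute the smooth average $\ave{R_2(f,N)}$ by unfolding the definitions and using Fourier analysis in $\alpha$. Since
\[
R_2(f,N)(\alpha) = \frac{1}{N}\sum_{1\le m\neq n\le N} F_N\big(\alpha(x_n-x_m)\big),
\]
where $F_N(x)=\sum_{j\in\Z} f(N(x+j))$ is the periodization, I would first Poisson-sum in $j$ to write $F_N$ as a Fourier series on $\R/\Z$. Concretely, $F_N(t)=\frac{1}{N}\sum_{k\in\Z}\^f(k/N)e(kt)$, where $\^f$ denotes the Fourier transform and $e(u)=e^{2\pi i u}$. Substituting this into $R_2$ and integrating against the probability density $\rho$ gives
\[
\ave{R_2(f,N)} = \frac{1}{N^2}\sum_{1\le m\neq n\le N}\sum_{k\in\Z} \^f\!\left(\frac{k}{N}\right)\^\rho\big(-k(x_n-x_m)\big),
\]
after interchanging sum and integral (justified by rapid decay of $\^f$ since $f$ is smooth and compactly supported, or by a limiting/approximation argument if $f$ is an indicator).

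The next step is to isolate the main term, which comes from $k=0$. The $k=0$ term contributes $\frac{1}{N^2}\sum_{m\neq n}\^f(0)\^\rho(0) = \frac{N(N-1)}{N^2}\intinf f(x)\,dx$, since $\^\rho(0)=\int\rho=1$ and $\^f(0)=\intinf f$. This equals $\intinf f(x)\,dx + O(1/N)$, producing the claimed main term. Everything else, the $k\neq 0$ tail, must be shown to be $O(N^{-\delta})$.

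For the error term, the key leverage is that $\rho$ is smooth and compactly supported, so $\^\rho$ is of Schwartz class and in particular $\^\rho(\xi)\ll_A (1+|\xi|)^{-A}$ for any $A$. Thus $\^\rho\big(-k(x_n-x_m)\big)$ is negligibly small unless $|k(x_n-x_m)|$ is small, say $\lesssim N^\varepsilon=K$. Since $\^f(k/N)$ is supported (up to rapid decay) on $|k|\ll N^{1+\varepsilon}=M$, the surviving terms are governed by the count of triples $(k,m,n)$ with $1\le m\neq n\le N$, $|k|\le M$, and $|k|\,|x_n-x_m| = |k|\,|a(x_m)-a(x_n)| \lesssim K$. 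This is precisely the quantity bounded by \eqref{cond expec}, which is $\ll N^{2-\delta}$. Accounting for the normalizing factor $1/N^2$ and the size $\^f(k/N)\^\rho(\cdots)\ll 1$ on the relevant range, the tail is $\ll N^{-\delta}$, as required. The ranges $M$ and $K$ are chosen exactly so that this reduction is clean, and the high-frequency and large-$|x_n-x_m|$ contributions outside these ranges are absorbed into the error using the Schwartz decay of $\^\rho$ and $\^f$.

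The main obstacle I expect is making the cutoffs honest: the conditions $|k|\le M$ and $|k|\,|a(x_m)-a(x_n)|<K$ are not sharp truncations but emerge from rapid decay, so one must carefully estimate the contribution of the truncated tails ($|k|>M$, or $|k|\le M$ but $|k|\,|a(x_m)-a(x_n)|\ge K$) and show they are $O(N^{-\delta})$ using the arbitrary-power decay of $\^\rho$ and $\^f$, together with a crude count of the remaining tuples. A secondary technical point is handling the case where $f$ is merely an indicator of an interval rather than smooth; there one approximates $f$ above and below by smooth functions and passes to the limit, or invokes the fact that the Fourier coefficients $\^f(k/N)$ still decay like $1/|k|$, which combined with the Schwartz decay of $\^\rho$ suffices.
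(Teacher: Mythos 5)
Your proposal is correct and follows essentially the same route as the paper: Poisson summation to expand $F_N$ in Fourier modes, extraction of the main term from the zero mode, an application of \eqref{cond expec} to the modes with $1\leq |n|\leq M$ and $|n(a(x)-a(y))|<K$, and rapid decay of $\^f$ and $\^\rho$ to dispose of the two tails ($|n|>M$, and $|n|\leq M$ with $|n(a(x)-a(y))|\geq K$), exactly as in the paper's three-way decomposition. The tail estimates you defer to as ``making the cutoffs honest'' are carried out in the paper by choosing the decay exponent $A=2/\varepsilon$, yielding error terms of size $O(N^{-1+\varepsilon})$, which is consistent with your outline.
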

\begin{proof}
Let $f\in C_c^\infty(\R)$. 
By using Poisson summation, we have the expansion  
 \[
 F_N(x) = \sum_{j\in \Z} f\big(N(x+j)\big) = \frac 1N\sum_{n\in \Z} \widehat f\Big(\frac nN\Big) e(nx)
 \]
 with $e(z):=e^{2\pi i z}$, which gives 
\begin{equation}\label{eq: Fourier expansion of R}
 R_2(\alpha) = \frac 1{N^2}\sum_{n\in \Z} 
 \widehat f\Big(\frac nN\Big) S_{n,N}(\alpha) 
\end{equation} 
where 
$$
S_{n,N}(\alpha) = \sum_{1\leq x \neq y \leq N}
e(\alpha n(a(x)-a(y))).
$$
Therefore the expected value is 
\[
\ave{R_2}=\intinf R_2(\alpha)\rho(\alpha)d\alpha = 
\frac 1{N^2}\sum_{n\in \Z} 
 \widehat f\Big(\frac nN\Big) \sum_{1\leq x \neq y \leq N} 
 \^\rho(n(a(x)-a(y))).
\]
The zero mode $n=0$ gives a contribution of 
\[
\frac 1{N^2}\^f(0) N(N-1) = \intinf f(x)dx (1+O(1/N)). 
\]

We split the sum over non-zero modes into two terms: Those with $1\leq |n|\leq M=N^{1+\varepsilon}$, and those with $|n|>M$. 
To treat the contribution of modes with $|n|>M=N^{1+\varepsilon}$, we use $|\^f(x)| \ll x^{-A}$  and $|\^\rho|\ll 1$ to  bound that term by 
\[
 \frac 1{N^2} \sum_{|n|>M} \Big(\frac nN \Big)^{-A} \sum_{1\leq x\neq y\leq N} 1 = \frac {N^A}{M^{A-1}} \ll \frac{1}{N^{1-\varepsilon}}
\]
on choosing $A=2/\varepsilon$. 

To bound the contribution of modes with $1\leq |n|\leq M$, we separate into a contribution of terms with $|n(a(x)-a(y))|<K$ and the rest. 

We use $|\^\rho|, |\^f|\ll 1$ to obtain that the contribution of terms with $|n(a(x)-a(y))|<K$ is 
\[
\ll \frac 1{N^2}\#\{1\leq n<N^{1+\varepsilon}, 1\leq y\neq x\leq N: n(a(x)-a(y))<N^\varepsilon\}.
\]
By \eqref{cond expec}, this is $\ll N^{-\delta}$. 

The contribution of  terms with $|n(a(x)-a(y))|>K$  is bounded using 
$$|\^\rho(n(a(x)-a(y)))\ll |n(a(x)-a(y))|^{-A}\leq K^{-A} =N^{-2}$$
and $|\^f|\ll 1$ by 
\[
%\begin{split}
\frac 1{N^2}\sum_{\substack{1\leq |n|\leq M\\1\leq x\neq y\leq N\\  |n(a(x)-a(y))|>K}} |\^f(\frac nN)|  
\^\rho( n(a(x)-a(y)) |
\ll \frac 1{N^2}\sum_{\substack{1\leq |n|\leq M\\1\leq x\neq y\leq N}} \frac 1{N^2} \leq \frac{M}{N^2}
%\end{split}
\]
which is $\ll N^{-1+\varepsilon}$. 
\end{proof}

\subsection{The variance}
%\marginpar{Do we need positivity?}
\begin{prop} \label{prop:lacunary variance} 
%Assume that $a(x)$ is a lacunary sequence of positive reals. 
Assume that $a(x)$ is a sequence of   real numbers such that \eqref{cond expec} and \eqref{lacunary ineq} hold. Then 
\[
\ave{\Big| R_2(f,N)-\intinf f(x)dx \Big|^2}\ll N^{-\delta}.
\]
 \end{prop}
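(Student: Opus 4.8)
The plan is to expand the Fourier representation \eqref{eq: Fourier expansion of R} of $R_2(f,N)(\alpha)$, split off the mean (carried entirely by the zero mode), and reduce the second moment of the remaining oscillating part to the two counting hypotheses \eqref{cond expec} and \eqref{lacunary ineq}. Since the $n=0$ term of \eqref{eq: Fourier expansion of R} equals $\frac{1}{N^2}\widehat f(0)N(N-1)=\intinf f(x)\,dx+O(1/N)$, I would write
\[
R_2(f,N)(\alpha)-\intinf f(x)\,dx = T(\alpha)+O(1/N),\qquad T(\alpha):=\frac{1}{N^2}\sum_{n\neq 0}\widehat f\Big(\frac nN\Big)S_{n,N}(\alpha),
\]
where the error $O(1/N)$ is a constant independent of $\alpha$. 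By the inequality $|u+c|^2\le 2|u|^2+2|c|^2$ this gives $\ave{|R_2(f,N)-\intinf f\,dx|^2}\ll \ave{|T|^2}+N^{-2}$, so it suffices to bound $\ave{|T|^2}$.

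Next I would discard the high modes $|n|>M$. Using the Schwartz decay $|\widehat f(x)|\ll_A |x|^{-A}$ and the trivial bound $|S_{n,N}(\alpha)|\le N(N-1)$, their contribution to $T(\alpha)$ is $\ll N^{1+\varepsilon-\varepsilon A}$ uniformly in $\alpha$, hence negligible for $A$ large. Thus $\ave{|T|^2}=\ave{|T_M|^2}+O(N^{-100})$, where $T_M$ is the truncation of $T$ to $1\le|n|\le M$. Expanding the square and integrating against $\rho$, and abbreviating $\Delta=n_1(a(x_1)-a(y_1))-n_2(a(x_2)-a(y_2))$, orthogonality yields
\[
\ave{|T_M|^2}=\frac{1}{N^4}\sum_{1\le|n_1|,|n_2|\le M}\widehat f\Big(\frac{n_1}N\Big)\overline{\widehat f\Big(\frac{n_2}N\Big)}\sum_{\substack{x_1\neq y_1\\ x_2\neq y_2}}\widehat\rho(\Delta),
\]
so with $|\widehat f|\ll 1$ we obtain $\ave{|T_M|^2}\ll N^{-4}\sum|\widehat\rho(\Delta)|$, the sum running over all six-tuples with $1\le|n_i|\le M$ and $x_i\neq y_i$.

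I would then split this sum according to the size of $|\Delta|$, so that the two regimes are controlled by the two hypotheses. When $|\Delta|<K$ I bound $|\widehat\rho(\Delta)|\ll 1$; the admissible six-tuples are then exactly those counted by $\mathcal S(N)$, and \eqref{lacunary ineq} gives a contribution $\ll N^{-4}\cdot N^{4-\delta}=N^{-\delta}$. When $|\Delta|\ge K$ I use the rapid decay $|\widehat\rho(\Delta)|\ll_A K^{-A}=N^{-\varepsilon A}$ together with the trivial count $\ll M^2N^4=N^{6+2\varepsilon}$ of all six-tuples, giving a contribution $\ll N^{2+2\varepsilon-\varepsilon A}$, which is $\ll N^{-\delta}$ once $A>(2+2\varepsilon+\delta)/\varepsilon$. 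Summing the two regimes yields $\ave{|T_M|^2}\ll N^{-\delta}$ and hence the proposition.

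The only genuinely arithmetic step is the near-diagonal regime, whose entire weight rests on the Diophantine inequality bound \eqref{lacunary ineq}; this is exactly the input that must be established for lacunary sequences in the next section. Everything else is soft and uses only the Schwartz decay of $\widehat f$ and $\widehat\rho$ --- separating the mean, truncating the high modes, and discarding the far-from-diagonal tuples. The one point requiring care is to check that the surviving terms of the expansion, namely those with $1\le|n_i|\le M$, $x_i\neq y_i$, and $|\Delta|<K$, coincide precisely with the defining constraints of $\mathcal S(N)$, so that \eqref{lacunary ineq} may be applied verbatim.
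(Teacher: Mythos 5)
Your proof is correct, and it rests on the same harmonic-analytic skeleton as the paper's: the expansion \eqref{eq: Fourier expansion of R}, truncation of the frequencies at $M=N^{1+\varepsilon}$ via the decay of $\widehat f$, a split at $|\Delta|=K$ via the decay of $\widehat\rho$, and the counting hypothesis \eqref{lacunary ineq} applied to the near-diagonal range. Where you genuinely diverge is in the centering, and your choice buys something. The paper centers $R_2$ at its average: by Cauchy--Schwarz, $\ave{|R_2-\intinf f(x)dx|^2}\ll \Var R_2+|\ave{R_2}-\intinf f(x)dx|^2$, with the bias term controlled by Lemma~\ref{lem:expectation} --- this is where hypothesis \eqref{cond expec} enters --- and the variance expansion \eqref{eq: variance in terms of Fourier coefficients} then runs over all $(n_1,n_2)\neq(0,0)$, so it contains mixed terms in which exactly one $n_i$ vanishes; such terms are not counted by $\mathcal S(N)$ and are likewise controlled by \eqref{cond expec}. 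You instead center at $\intinf f(x)dx$ itself, exploiting the fact that the zero mode of $R_2$ is the deterministic constant $(1-1/N)\intinf f(x)dx$; consequently your second moment $\ave{|T_M|^2}$ involves only pairs with $n_1\neq 0$ \emph{and} $n_2\neq 0$, the surviving near-diagonal tuples are exactly the elements of $\mathcal S(N)$, and \eqref{cond expec} is never invoked. Your argument therefore establishes the proposition from \eqref{lacunary ineq} alone (given the choices $M=N^{1+\varepsilon}$, $K=N^{\varepsilon}$), a slightly stronger statement, and it sidesteps the ``one frequency zero'' terms entirely. One detail to make explicit: your final bound is $N^{-\delta}+N^{-2}$, so you should note that one may assume $\delta\leq 1$; indeed the diagonal tuples with $n_1=n_2$ and $(x_2,y_2)=(x_1,y_1)$ already force $\#\mathcal S(N)\gg MN^2\geq N^3$, so \eqref{lacunary ineq} cannot hold with $\delta>1$, and the stray $N^{-2}$ is harmless --- the same observation the paper makes when it absorbs its $O(N^{-4})$ error using $\#\mathcal S(N)\geq N^3$.
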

\begin{proof}
By Cauchy-Schwarz, 
\begin{align*}
\ave{\Big| R_2(f,N)-\intinf f(x)dx \Big|^2} & \leq 
 2\ave{\Big| R_2(f,N)-\ave{R_2} \Big|^2}
\\
& +2 \ave{\Big| \ave{R_2}-\intinf f(x)dx \Big|^2}.
\end{align*}
By Lemma~\ref{lem:expectation}, 
$$ \ave{\Big| \ave{R_2}-\intinf f(x)dx \Big|^2}\ll N^{-2\delta}.
$$ 
We now show that 
\begin{equation}\label{bound on var}
 \Var R_2=\ave{\Big| R_2(f,N)-\ave{R_2}\Big|^2} \ll N^{-\delta} 
 %N^{-1+\varepsilon},\quad \forall \varepsilon>0
 \end{equation}
which will  prove Proposition~\ref{prop:lacunary variance}.

 To prove \eqref{bound on var}, it suffices to show by \eqref{lacunary ineq}  that
 \begin{equation} \label{lem: variance bounded in terms of counting}
\Var R_2 \ll_f \frac{\#\mathcal S(N) }{N^4}
\end{equation}

By using the expansion \eqref{eq: Fourier expansion of R}, the variance can be written as
  \begin{equation}\label{eq: variance in terms of Fourier coefficients}
   \Var(R_{2})=\frac{1}{N^{4}} \sum_{(n_1,n_2)  \in \mathbb{Z}^{2}\setminus\{0\}} 
  \widehat{f}\Big(\frac{n_1}{N}\Big) \widehat{f}\Big(\frac{n_2}{N}\Big)
  w(\no,\nt,N).
  \end{equation}
where  for integers $n_1,n_2$, we let
$$w(\no,\nt, N) = 
\sum_{\substack{1 \leq x_1 \neq x_3 \leq N, \\ 
1\leq x_2 \neq x_4 \leq N}}
\widehat{\rho}\Big(\no (a(x_3)-a(x_1))
-\nt(a(x_4)-a(x_2))\Big)$$
and $\rho$  as in \eqref{eq: smooth ave}.

Due to the rapid decay  
of $\widehat{f}$, the contribution from the range in which $|\no|$ or $|\nt|$ exceeds $M=N^{1+ \varepsilon}$ 
is negligible, as we will argue now.
We detail only the case  $\max\{|\no|,|\nt|\} =\no \geq M$,
since the other case can be done similarly. 
We observe the trivial bound 
$\vert w(\no,\nt,N) \vert \ll N^4$. Moreover, 
$$n_1=n^{\varepsilon/2}_{1}n^{1-\varepsilon/2}_{1}
\geq n^{\varepsilon/2}_{1} N^{1+\varepsilon/2-\varepsilon^{2}/2}
$$
which, since $\varepsilon$ is small, yields $n_1 > n^{\varepsilon/2}_{1} N^{1+\varepsilon/3}$.
Hence, the contribution to the right hand side of \eqref{eq: variance in terms of Fourier coefficients} 
arising from the terms with $\max\{|\no|,|\nt|\} =\no \geq M=N^{1+ \varepsilon}$ and $n_2 \neq 0$ is
 \begin{align*}
\ll \frac 1{N^4} \sum_{\substack{\no,\nt
 \in \mathbb{Z}\setminus\{0\}\\ 
 \vert \no \vert >N^{1+ \varepsilon}}} 
 \Big(\frac{\no}{N}\Big)^{-  18/\varepsilon}  
 \sum_{n_2\neq 0} \widehat f\Big(\frac{n_2}{N}\Big)
 % (\frac{\nt}{N})^{-2}
 N^{4}
 %\ll \frac 1{N^4} \sum_{\no,\nt \in \mathbb{Z}\setminus\{0\}}  \no^{-2} \nt^{-2}
 \ll \frac 1{N^4}.
 \end{align*}
Moreover, the terms satisfying $\max\{|\no|,|\nt|\} =\no \geq N^{1+ \varepsilon}$, and $n_{2}=0$ 
are in absolute value 
 $$
 \ll \frac 1{N^4}\sum_{\vert \no \vert \geq N^{1+\varepsilon}} 
 \widehat{f}\Big(\frac{\no}{N}\Big) N^{4}\ll \frac 1{N^4}.
 $$
So, the upshot is that on the right hand side of \eqref{eq: variance in terms of Fourier coefficients} 
the sum over all $(n_1,n_2)$ with $\max(|n_1|,|n_2|)>N^{1+\varepsilon}$
   
contributes $\ll N^{-4}.$
By the rapid decay  
of $\widehat{\rho}$, we can dispose of the regime where 
 $ \vert n_1 (a(x_3)-a(x_1))-n_2(a(x_4)-a(x_2))  \vert \geq N^{\varepsilon}.
 $

By bounding $\hat{\rho}$ trivially, we find that
\[
\Var R_2 \ll \frac{\#\mathcal S(N)}{N^4} + O\Big(\frac 1{N^4}\Big) .
\]
Since $\#\mathcal S(N)\geq N^3$, we obtain \eqref{lem: variance bounded in terms of counting}. 
\end{proof}

    \section{Almost everywhere convergence: Proof of Theorem~\ref{prop:Reduce pc to ineq}}

 We now deduce  almost everywhere convergence   
 from a polynomial variance bound.

 \subsection{Preparations}\label{secion: preparations}
 We need a general property of the pair correlation function. Recall that for any sequence of points 
 $\{\theta_n\}_{n=1}^\infty \subset \R/\Z$, we defined
 \[
 R_2(f,N) = \frac 1N \sum_{1\leq j\neq k\leq N} F_N(\theta_j-\theta_k)
 \]
 with $ F_N(x) = \sum_{j\in \Z} f(N(x-j))$. 
 
 \begin{lem}\label{lem: conv subsequences}
 Suppose there is a strictly increasing sequence 
 $\{ N_m \}_{m=1}^\infty \subseteq \mathbb{Z}_{\geq 1} $, with 
 \[
\lim_{m\to \infty} \frac{N_{m+1}}{N_m} = 1
 \] 
 so that for all $f\in C_c^\infty(\R)$, 
 \begin{equation}\label{convergence on subsequence}
 \lim_{m\to \infty} R_2(f,N_m) = \intinf f(x)dx .
 \end{equation}
 Then we can pass from the sub-sequence to the set of all integers:
\begin{equation}\label{convergence on full sequence}
 \lim_{N\to \infty} R_2(f,N) = \intinf f(x)dx
 \end{equation}
 for all $f\in  C_c^\infty(\R)$.
 \end{lem}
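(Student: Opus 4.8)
The plan is to deduce the full-sequence limit \eqref{convergence on full sequence} from the subsequence limit \eqref{convergence on subsequence} by a sandwiching argument, exploiting that $R_2(f,N)$ is a sum of \emph{non-negative} terms when $f\geq 0$, together with the hypothesis $N_{m+1}/N_m\to 1$. First I would reduce to the case $f\geq 0$. Since both $R_2(f,N)$ and $\intinf f(x)\,dx$ are linear in $f$, and any $f\in C_c^\infty(\R)$ can be written as $f=(f+B)-B$ with $B\geq 0$ a smooth bump equal to $\|f\|_\infty$ on the support of $f$ (so that $f+B\geq 0$ and $B\geq 0$ both lie in $C_c^\infty(\R)$), it suffices to prove \eqref{convergence on full sequence} for non-negative $f$.

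Fix $f\geq 0$ in $C_c^\infty(\R)$ and $\varepsilon>0$. For given $N$ let $m=m(N)$ be determined by $N_m\leq N<N_{m+1}$, so $m\to\infty$ as $N\to\infty$. Two distinct effects arise when comparing the scale $N$ with a subsequence index: the range of summation over pairs, and the localization scale inside $F_N(x)=\sum_{j\in\Z} f(N(x-j))$. The summation range is controlled by monotonicity: since $f\geq 0$,
\[
\frac{1}{N_{m+1}}\sum_{1\leq j\neq k\leq N_m} F_N(\theta_j-\theta_k)
\;\leq\; R_2(f,N)\;\leq\;
\frac{1}{N_m}\sum_{1\leq j\neq k\leq N_{m+1}} F_N(\theta_j-\theta_k),
\]
and the prefactor ratios $N_m/N_{m+1}$, $N_{m+1}/N_m$ tend to $1$. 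The scale mismatch is the crux. Writing $F_N^{g}$ for the periodization built from a function $g$ at scale $N$, and noting that for large $N$ the bumps $f(N(x-j))$ are disjointly supported, the comparison of $F_N^f$ with a scale-$N_{m\pm}$ periodization reduces to a single-bump inequality: $F_N^{f}\leq F_{N_{m+1}}^{g}$ holds provided $g(s)\geq f\big((N/N_{m+1})s\big)$ for all $s$, where $N/N_{m+1}\in[N_m/N_{m+1},1]$.

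I would therefore, \emph{after} fixing $\varepsilon$, choose a tolerance $c^*>1$ and fixed functions $g,h\in C_c^\infty(\R)$ with
\[
g(s)\geq \sup_{\mu\in[1/c^*,1]} f(\mu s),\quad \intinf g(s)\,ds\leq \intinf f(s)\,ds+\varepsilon,
\]
and $0\leq h(s)\leq \inf_{\nu\in[1,c^*]} f(\nu s)$ with $\intinf h(s)\,ds\geq\intinf f(s)\,ds-\varepsilon$; these exist because $\intinf\sup_{\mu\in[1/c^*,1]} f(\mu s)\,ds\downarrow\intinf f$ and $\intinf\inf_{\nu\in[1,c^*]} f(\nu s)\,ds\uparrow\intinf f$ as $c^*\downarrow 1$ (monotone/dominated convergence), after which smooth compactly supported majorants and minorants within $\varepsilon$ are obtained by mollification. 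Since $N_{m+1}/N_m\to 1$, there is $m_0$ with $N_{m+1}/N_m\leq c^*$ for all $m\geq m_0$; then for every $N$ with $N_m\leq N<N_{m+1}$ and $m\geq m_0$ the fixed $g,h$ dominate the relevant dilates of $f$, giving
\[
\frac{N_m}{N_{m+1}}\,R_2(h,N_m)\;\leq\; R_2(f,N)\;\leq\;\frac{N_{m+1}}{N_m}\,R_2(g,N_{m+1}).
\]
Applying the subsequence hypothesis \eqref{convergence on subsequence} to the fixed test functions $g$ and $h$ and letting $N\to\infty$ yields $\limsup_N R_2(f,N)\leq\intinf g\leq\intinf f+\varepsilon$ and $\liminf_N R_2(f,N)\geq\intinf h\geq\intinf f-\varepsilon$; letting $\varepsilon\to 0$ proves \eqref{convergence on full sequence} for $f\geq 0$, and then for all $f\in C_c^\infty(\R)$ by the linearity reduction.

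The main obstacle is precisely the scale mismatch: unlike the summation range, the localization scale $1/N$ inside $F_N$ cannot be compared by pure monotonicity, so one must dominate the dilated test functions $f\big((N/N_{m+1})\,\cdot\,\big)$ by a \emph{single} smooth majorant uniformly over the block $N_m\leq N<N_{m+1}$. Fixing $\varepsilon$, and hence $g$ and $h$, before taking the limit in $N$ is essential: otherwise the comparison functions would vary with $m$ and the subsequence hypothesis could not be invoked.
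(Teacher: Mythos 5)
Your proposal is correct, and it reaches the conclusion by a genuinely different organization of the same underlying sandwich mechanism. Both arguments rest on two observations: positivity of the summands makes $\sum_{1\leq j\neq k\leq N}F_N(\theta_j-\theta_k)$ monotone in the range of summation, and the scale mismatch between $F_N$ and $F_{N_m}$ can be absorbed by slightly dilating the test function; both then squeeze $R_2(\cdot,N)$ over the block $N_m\leq N<N_{m+1}$ and use $N_{m+1}/N_m\to 1$. The difference is which class of functions carries the sandwich. The paper works with the indicator functions $I_s$ of the intervals $(-s/2,s/2)$, for which dilation acts trivially within the family, $I_s(\lambda x)=I_{s/\lambda}(x)$, so no majorant needs to be constructed; the price is two approximation steps that your proof avoids entirely: the subsequence hypothesis, stated only for $C_c^\infty(\R)$, must first be transferred to the $I_s$, and at the end a general even smooth $f$ must be recovered from linear combinations of the $I_s$ (odd $f$ being handled separately by symmetry). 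You instead reduce to $f\geq 0$ by writing $f=(f+B)-B$, and build, once $\varepsilon$ and $c^*>1$ are fixed, smooth compactly supported functions $g\geq\sup_{\mu\in[1/c^*,1]}f(\mu\,\cdot)$ and $0\leq h\leq\inf_{\nu\in[1,c^*]}f(\nu\,\cdot)$ whose integrals are within $\varepsilon$ of $\intinf f(x)dx$; the hypothesis is then invoked only for the fixed smooth functions $g,h$, never for indicators. What each buys: the paper's route has trivial comparison functions but leaves its approximation steps largely implicit; yours is more self-contained, since the hypothesis is never applied outside $C_c^\infty(\R)$, at the cost of the sup-plus-mollification construction of $g$ and $h$, which is routine but should be carried out as you indicate. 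Two small remarks: the disjointness of the bumps $f(N(x-j))$ is not needed, since term-by-term domination of the periodized sums suffices; and your insistence on fixing $g,h$ before letting $N\to\infty$ is exactly the point at which the paper fixes $\varepsilon$ and passes to $I_{s/(1-\varepsilon)}$ --- both proofs hinge on it.
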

 \begin{proof}
We will first deduce that \eqref{convergence on subsequence} holds for the indicator functions 
 \[
 I_s(x) = \begin{cases} 1,& |x|<s/2,\\ 0,&{\rm otherwise}, \end{cases}
 \]
 by approximating with smooth functions, and show that \eqref{convergence on full sequence} holds for the functions $I_s$, and then deduce by approximating a general even smooth $f\in C_c^\infty(\R)$  by linear combinations of $I_s$ 
 that \eqref{convergence on full sequence} holds for all such $f$.
 Note that for odd smooth $f\in C_c^\infty(\R)$, we have 
 $F_N(-x)=-F_N(x)$ which entails $R_2(f,N)=0$, 
 so the pair correlation function $R_2(f,N)$ 
 converges trivially to the right limit.

 From the definition of $R_2(I_s,N)$ we have a monotonicity property: 
Let $0<\varepsilon<1$.   If $  (1-\varepsilon)N'<N<N'$   and $N''<N<(1+\varepsilon)N''$ then 
 \begin{equation}\label{monotonicity prop}
  (1-\varepsilon) R_2(I_{(1-\varepsilon)s} ,N'') \leq R_2(I_s,N) \leq \frac 1{1-\varepsilon} R_2(\ I_{s/(1-\varepsilon)},N') .
 \end{equation}
    Indeed, using positivity of $ I_s$ (hence of $F_N$)
 \[
 N\cdot R_2(I_s,N) = \sum_{1\leq j\neq k\leq N} F_N(\theta_j-\theta_k) \leq \sum_{1\leq j\neq k\leq N'} F_N(\theta_j-\theta_k)  .
 \]
 Now if   $1>N/N'\geq 1-\varepsilon>0$ then since $I_s$ is even and decreasing on $[0,\infty)$,  we have 
  \[
 I_s(Ny) = I_s\Big(N' y\cdot \frac{N}{N'}\Big) 
 \leq I_s(N'y(1-\varepsilon)) = I_{s/(1-\varepsilon)}(N'y   ).
 \]
So 
 \[ 
 F_N(x) =    \sum_{j\in \Z} I_s(N \cdot(x-j)) 
 \leq \sum_{j\in \Z} I_{s/(1-\varepsilon)} (N'\cdot(x-j)) 
 = \tilde F_{N'}(x) 
\] 
where $\tilde F_{N'}(y) = \sum_{j\in \Z}  I_{s/(1-\varepsilon)}({N'}(y-j))$. Hence 
 \[
 R_2(I_s,N) \leq \frac{N'}{N} R_2(I_{s/(1-\varepsilon)},N') \leq \frac 1{1-\varepsilon}  R_2(I_{s/(1-\varepsilon) },N')
\]
which proves the upper bound in \eqref{monotonicity prop}. 
The lower bound of \eqref{monotonicity prop} follows from switching the roles of $N$ and $N''$ and inserting in the upper bound.

 Next, fix $\varepsilon\in (0,1)$ small, let $N\gg 1$, and   take $m\gg 1$ so that 
 \[
 N_m<N_{m+1}<(1+\varepsilon)N_m
 \]
 and so if $N_m\leq N<N_{m+1}$ then 
 \[
  (1-\varepsilon)N_{m+1} <N < N_{m+1}, \quad N_{m}\leq N<(1+\varepsilon)N_m .
 \]
 Then for all $s>0$, we have
 \[
 (1-\varepsilon) R_2(I_{(1-\varepsilon)s}, N_m) \leq R_2(I_s,N) \leq \frac 1{1-\varepsilon} R_2(I_{s/(1-\varepsilon)},N_{m+1}) .
 \] 
 Taking $m\to \infty$, we find by \eqref{convergence on subsequence}
 \[
 \limsup_{N\to \infty} R_2(I_s,N) \leq \frac 1{1-\varepsilon}  \intinf I_{s/(1-\varepsilon)} dx = \frac{s}{(1-\varepsilon)^2}
  \]
 and 
 \[
 \liminf_{N\to \infty} R_2(I_s,N)  \geq (1-\varepsilon) \intinf I_{ (1-\varepsilon)s} dx  = (1-\varepsilon)^2s .
 \]
 Since $\varepsilon>0$ is arbitrary, we finally obtain 
 \[
 \lim_{N\to \infty} R_2(I_s,N) = s = \intinf I_s(x)dx
 \]
 so that \eqref{convergence on full sequence} holds for all indicator functions $I_s$. 
 Therefore \eqref{convergence on full sequence} holds for all test functions $f\in C_c^\infty(\R)$.
 \end{proof} 
 
 \subsection{Proof of Theorem~\ref{prop:Reduce pc to ineq}}
 
 It suffices to show that for almost every $\alpha$ 
 in a fixed compact interval $I$ we have 
  \begin{equation}\label{full conv}
\lim_{N\to \infty}R_2(f,N)(\alpha)=\intinf f(x)dx
 \end{equation} 
 for all $f\in C_c^\infty(\R)$. 
 
Let $\rho\in C^\infty_c(\R)$ be a non-negative function majorizing the indicator function of the interval $I$: 
$\mathbf 1_{I}\leq \rho$. Then from the variance bound of Proposition~\ref{prop:lacunary variance}, we find that  
 for some $ \delta>0$, for all $f\in C_c^\infty(\R)$, 
 \[
   \int_{I} \Big |R_2(f,N_m)(\alpha)-\intinf f(x)dx \Big|^2 \rho(\alpha)d\alpha \ll_f N^{-\delta} .
 \]
 Hence for the sequence
 \[
 N_m = \lfloor m^{2/\delta} \rfloor
 \]
 we have that for almost all $\alpha \in I$,
 \begin{equation}\label{subsequential conv}
\lim_{m\to \infty} R_2(f,N_m)(\alpha) = \intinf f(x)dx
 \end{equation}
 for all $f$. Indeed, for each fixed $f$  set 
 \[
 X_m (\alpha) = \Big|R_2(f,N_m)(\alpha)-\intinf f(x)dx \Big|^2 .
 \]
Then
 \[
 \int_{I} X_m(\alpha)d\alpha \leq  \int_{-\infty}^{\infty}   X_m (\alpha)  \rho(\alpha)d\alpha  \ll   \frac 1{N_m^{\delta}} \ll \frac 1{m^2} .
 \] 
 Therefore 
  \[
 \int_{I} \Big(  \sum_{m\geq 1} X_m(\alpha) \Big) d\alpha  \leq \sum_{m\geq 1} \int_{-\infty}^\infty X_m(\alpha) d\alpha \ll_f   \sum_{m\geq 1} \frac 1{m^2}<\infty
 \]
 so that $\sum_{m\geq 1} X_m(\alpha)$ converges 
 for almost all $\alpha\in I$. Thus 
 $$\lim_{m\to \infty}X_m(\alpha)=0$$ 
 for almost all $\alpha$, i.e. \eqref{subsequential conv} holds for our specific $f$ for almost all $\alpha\in I$. 
 
 By a diagonalization argument (see \cite{RudnickSarnak})  there is a set of $\alpha$ whose complement has measure zero so that \eqref{subsequential conv} holds for all $f$. Since $N_{m+1}/N_m\to 1$, 
 we can use Lemma~\ref{lem: conv subsequences} to deduce \eqref{full conv} holds, proving Theorem~\ref{prop:Reduce pc to ineq}.

 \section{Lacunary sequences}\label{sec:lacunary}
 
 From now on, we assume that  $\{a(x)\}_{x=1}^\infty$ 
 is a lacunary sequence of (strictly) positive reals, 
 that is there is some $C>1 $ 
so that for all integers $x\geq 1$, 
\[
a(x+1)\geq C a(x)
\] 
for all $x\geq 1$. Consequently, we have for all $x\geq y\geq 1$ that
\[
a(x)\geq C^{x-y} a(y) .
\]
We will show that \eqref{cond expec} and \eqref{lacunary ineq} hold, hence proving that the pair correlation function of 
$\{\alpha a(x) \mod 1\}_{x=1}^\infty$ is Poissonian for almost all $\alpha$, that is Theorem~\ref{thm:lacunary}.

 \subsection{The condition  \eqref{cond expec}}
 \begin{lem}
Assume that the sequence $\{a(x)\}_{x=1}^\infty$ is lacunary. Then \eqref{cond expec} holds, in fact with a bound of 
\[
\#\{1\leq n\leq M, 1\leq x\neq y\leq N: n|a(x)-a(y)|<K \} 
\ll N^{2\varepsilon}.
%\ave{R_2} = \intinf f(x)dx (1-\frac 1N) + O(N^{-2+2\varepsilon})
\]
\end{lem}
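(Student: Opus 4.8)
The plan is to lean entirely on the two consequences of lacunarity already recorded, namely that $a$ is strictly increasing with $a(x)\ge C^{x-1}a(1)$ and that a fixed gap in the index forces a multiplicative gap in the values. First I would symmetrise: the counting condition $n|a(x)-a(y)|<K$ is unchanged under swapping $x$ and $y$, so up to a factor $2$ I may assume $x>y$, whence $|a(x)-a(y)|=a(x)-a(y)>0$. Fixing such a pair, the admissible $n$ satisfy $1\le n<K/(a(x)-a(y))=N^\varepsilon/(a(x)-a(y))$, so their number is at most $N^\varepsilon/(a(x)-a(y))$; in particular a pair contributes nothing unless $a(x)-a(y)<N^\varepsilon$.

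The crux is a clean lower bound for each gap. Since $y<x$ gives $a(y)\le C^{-(x-y)}a(x)\le C^{-1}a(x)$, I obtain
\[
a(x)-a(y)\ \ge\ a(x)\Bigl(1-\tfrac1C\Bigr)\ =\ \frac{C-1}{C}\,a(x)\ \ge\ \frac{C-1}{C}\,a(1)\,C^{x-1}
\]
for every $y<x$. Thus all gaps sharing the larger index $x$ are comparable to $a(x)$, and the per-pair count is $\ll_C N^\varepsilon C^{-(x-1)}/a(1)$.

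Summing over the (at most $x-1$) choices of $y$ and then over $x$, the total is
\[
\ll_C\ N^\varepsilon\sum_{x=2}^{N}\frac{x-1}{C^{x-1}a(1)}\ \ll_{C,a(1)}\ N^\varepsilon\sum_{j\ge1}\frac{j}{C^{j}}\ \ll_C\ N^\varepsilon,
\]
because $\sum_{j\ge1} jC^{-j}=C/(C-1)^2<\infty$. This already gives $\ll N^\varepsilon$, comfortably inside the claimed $N^{2\varepsilon}$.

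I expect no genuine obstacle here: the exponential growth of $a(x)$ does all the work, since the geometric factor $C^{-x}$ makes the sum over the larger index converge by itself. In particular I would not even need the a priori truncation $x\ll\log N$ that the constraint $a(x)-a(y)<N^\varepsilon$ supplies, nor the cap $n\le M$, which is never binding because the smallest possible gap $a(2)-a(1)\ge a(1)(C-1)$ is a fixed positive constant. The only care required is bookkeeping of the implied constants' dependence on the fixed quantities $C$ and $a(1)$.
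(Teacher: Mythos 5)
Your proof is correct --- in fact it gives the stronger bound $\ll_{C,a(1)} N^{\varepsilon}$ --- and it rests on the same key fact as the paper's proof: lacunarity forces $a(x)-a(y)\geq a(x)(1-1/C)\gg_{C,a(1)} C^{x}$ whenever $y<x$, so gaps grow geometrically in the larger index. Where you differ is in how the count is assembled. The paper uses the gap bound in \emph{truncation} mode: since any solution requires $a(x)-a(y)< K=N^{\varepsilon}$ already for $n=1$, all solutions have $x$ (hence $y$) confined to a window of length $O(\varepsilon\log_C N)$, while the uniform lower bound $a(x)-a(y)\geq a(1)(C-1)$ confines $n$ to $n\ll N^{\varepsilon}$; multiplying the ranges gives $\ll N^{\varepsilon}(\log N)^{2}\ll N^{2\varepsilon}$. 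You instead fix the pair $x>y$, bound the number of admissible $n$ by $K/(a(x)-a(y))\ll_{C,a(1)}N^{\varepsilon}C^{-(x-1)}$, and sum over all pairs, letting the geometric decay produce the convergent series $\sum_{j\geq 1}jC^{-j}=C/(C-1)^{2}$. Your organization buys a log-free bound and lets you discard both the truncation $x\ll \log N$ and the cap $n\leq M$, exactly as you observe. What the paper's coarser route buys is pedagogical economy: its truncation device --- all relevant indices are trapped in logarithmic windows, after which trivial counting suffices --- is precisely the mechanism that reappears, in more elaborate form (the bounds $x_1-x_2\leq 2\log_C M$ and so on), in the proof of Proposition~\ref{prop:counting} for condition \eqref{lacunary ineq}, so the lemma as written serves as a warm-up for that harder argument.
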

\begin{proof}
Since the sequence is lacunary, we have for $y<x$ that 
\[
a(x)-a(y) \geq a(x)\Big(1-\frac 1{C^{x-y}}\Big)\geq C^x\Big(1-\frac 1{C^{x-y}}\Big)
\]
which is $\gg N^\varepsilon$ as soon as  $x\geq \epsilon \log_C N$
where $\log_C N=(\log N)/\log C$ . Hence to satisfy the inequality we must have $n<N^\varepsilon$, and $y<x\ll \log N$, so that we have at most $N^{2\varepsilon}$ solutions. 

\end{proof}

 \subsection{The condition \eqref{lacunary ineq}}
 
\begin{prop}\label{prop:counting}
Assume that  $\{a(x)\}_{x=1}^\infty$ is a lacunary sequence of positive real numbers, and that $N^\gamma\ll M\ll N^\Gamma$ for some $0<\gamma<\Gamma<2$. Then 
\[
\#\mathcal S(N)   \ll MN^2(\log M)^2.
\]
\end{prop}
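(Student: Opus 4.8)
\emph{Approach and reductions.}
The plan is to fix the ``smaller half'' of each configuration and count the ``larger half'', exploiting that a lacunary difference is comparable to its largest term: from $a(y)\le a(x)/C$ for $y<x$ one gets
\[
\Big(1-\tfrac1C\Big)a(x)\le a(x)-a(y)\le a(x).
\]
First I would reduce by symmetry. Relabelling inside each pair costs a factor $4$, so I may assume $x_1>y_1$, $x_2>y_2$, and swapping the two pairs lets me assume $x_1\ge x_2$, i.e.\ $x_1$ is the largest of the four indices; write $b_i=a(x_i)-a(y_i)$. If $n_1,n_2$ have opposite signs then $|n_1b_1-n_2b_2|$ is a sum of two positive terms, forcing $n_1b_1\ll K$ and $n_2b_2\ll K$, which pins both indices of each pair into an initial segment of length $\ll\log N$ and both multipliers into $[1,\ll K]$; this, and the regime where the common value is $\ll KM$ or $a(x_1)\ll K$, is a lower-order contribution that I would dispose of by a crude count (both sides are then $\ll KM$, yielding $\ll(KM)^2\ll MN^2$ tuples once $\varepsilon$ is small relative to $2-\Gamma$). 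So I may assume $n_1,n_2\ge1$ and work in the generic range.

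\emph{Fixing the smaller pair and the coarse count.}
I would then fix the triple $(n_2,x_2,y_2)$ --- there are $\ll MN^2$ of these --- and set $P=n_2b_2$, recording the crucial inequality $P\le M\,a(x_2)\le M\,a(x_1)$. It remains to count $(n_1,x_1,y_1)$ with $x_1\ge x_2$ and $|n_1b_1-P|<K$. For the coarse variable $x_1$: since $(1-1/C)a(x_1)\le b_1\le a(x_1)$ and $1\le n_1\le M$, the relation $n_1b_1=P+O(K)$ confines $a(x_1)$ to the multiplicative window $[(P-K)/M,\,(P+K)/(1-1/C)]$, whose endpoints have ratio $\ll M$. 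By lacunarity such a window contains $\ll\log M$ values $a(x_1)$, so there are $\ll\log M$ admissible $x_1$.

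\emph{The fine count.}
The heart of the matter is showing that, for each admissible $x_1$ (write $s=a(x_1)$, so $P\le Ms$), the number of $(n_1,y_1)$ with $|n_1(s-a(y_1))-P|<K$ is $\ll\log M$. I would split the admissible $y_1$ at the threshold $a(y_1)=s/M$. If $a(y_1)\ge s/M$, then $y_1<x_1$ gives $a(y_1)\le s/C$, so these $a(y_1)$ lie in a multiplicative window of ratio $\ll M$ and number $\ll\log M$; for each, $b_1\ge(1-1/C)s\gg K$ forces the admissible interval $((P-K)/b_1,(P+K)/b_1)$ for $n_1$ to have length below $1$, giving $\ll1$ choice of $n_1$. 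If instead $a(y_1)<s/M$, then $b_1\in(s(1-1/M),s]$, so $n_1=P/b_1+O(K/b_1)$ is confined to an interval of length $\ll P/(Ms)\ll1$ --- this is exactly where $P\le Ms$ (hence $x_1\ge x_2$) enters --- whence $\ll1$ integer $n_1$ occurs, and for that fixed $n_1$ the admissible $a(y_1)$ lie in an interval of length $2K/n_1$, which contains $\ll\log M$ lacunary points. Either way the fine count is $\ll\log M$. Multiplying the three tallies gives $\#\mathcal S(N)\ll MN^2(\log M)^2$; the hypothesis $M\ll N^\Gamma$ with $\Gamma<2$ then converts this into the bound $\ll N^{4-\delta}$ required for \eqref{lacunary ineq}.

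\emph{Main obstacle.}
I expect the main difficulty to be the fine count, and within it the small regime $a(y_1)<s/M$. A single width-$K$ inequality pins only one real degree of freedom, so naively the number of admissible $y_1$ is governed by $\log P$, which is uncontrolled (the sequence may grow doubly exponentially). What rescues the estimate is the integrality of $n_1$ combined with the inequality $P\le Ms$ coming from $x_1\ge x_2$; making this quantitative, choosing the threshold cleanly so the two regimes genuinely exhaust the admissible $y_1$, and separating the degenerate small-scale and opposite-sign contributions so they do not swamp the main term, is the delicate part of the proof.
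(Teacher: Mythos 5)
Your argument is correct, and it is essentially the mirror image of the paper's proof: both follow the scheme [fix one triple $(n,x,y)$: $\ll MN^2$ choices] $\times$ [lacunarity localizes the top index of the other pair: $\ll \log M$ choices] $\times$ [the remaining two variables contribute $\ll \log M$], and both discard degenerate configurations by a crude count that is $o(MN^2)$ precisely because $\Gamma<2$. The difference is which triple gets fixed. The paper fixes the pair containing the \emph{largest} index, $(n_1,x_1,y_1)$, and counts $(n_2,x_2,y_2)$: it turns $n_2(a(x_2)-a(y_2))\le Ma(x_2)\le Ma(x_1)/C^{x_1-x_2}$ into the additive localization $x_1-x_2\le 2\log_C M$, notes that each $y_2$ determines $n_2$ up to $O(1)$, and splits the $y_2$-count at $x_2-y_2\gtrless 2\log_C M$, killing the far regime by showing that all far solutions share a single value of $n_2$ (this is where $a(y_2)\le a(x_2)/M^2$ is used). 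You fix the \emph{smaller} pair instead, so that $P=n_2b_2$ is known; the same lacunarity fact then appears as the multiplicative window $(P-K)/M\le a(x_1)\le (P+K)/(1-1/C)$ of ratio $\ll M$, and your fine count splits at $a(y_1)\gtrless s/M$, with integrality of $n_1$ together with $P\le Ms$ (the one place $x_1\ge x_2$ enters, as you correctly flag) pinning $n_1$ to $O(1)$ values in the small regime, after which an interval of length $2K/n_1$ contains $\ll \log K\ll\log M$ lacunary points. The two routes cost the same and give the same bound $MN^2(\log M)^2$. Yours has two small advantages: you treat the opposite-sign case of $(n_1,n_2)$ explicitly, which the paper silently absorbs into ``we may assume $n_i>0$'', and you never need the paper's ``at most one far $y_2$'' claim, whose proof as written pins $n_2$ rather than $y_2$ (harmless, since any two such $y_2$ force the larger one to satisfy $a(y_2)\ll K$, leaving $\ll\log K$ possibilities, but your uniform $O(\log M)$ bookkeeping sidesteps this). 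The paper's ordering, in turn, gets away with a single initial reduction ($x_1\ge 4\log_C M$) instead of your three exclusions ($n_1n_2<0$, $P\ll KM$, $a(x_1)\ll K$), so the case analysis up front is lighter.
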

In view of Theorem~\ref{prop:Reduce pc to ineq}, we deduce Theorem~\ref{thm:lacunary}. 

\begin{proof}
The proof is a  modification of  \cite[Proposition 2]{RZ pc}: We are given the inequality
\begin{equation}\label{lacunary ineq2}
| n_1 (a(x_1)-a(y_1))- n_2(a(x_2)-a(y_2)) | < K.
\end{equation}
We may assume that $n_i>0$, and $1\leq y_i<x_i\leq N$, $i=1,2$, and that $x_1\geq x_2$. In particular, we may then assume that $x_1\geq 4\log_C M \gg \log N$, 
because the number of such tuples with $x_1\ll \log N$ is at most 
$O(M^2(\log N)^4)$, which  is admissible 
(that is, $o(MN^2(\log N)^2)$) if $M=O(N^\Gamma)$ for $\Gamma<2$.

We fix $n_1,x_1,y_1$, and first show that (recall $x_1\geq x_2$) 
\begin{equation}\label{diff between s_i}
x_1 - x_2 \leq 2\log_C M .
\end{equation}
Indeed,  we have a lower bound 
\[
  n_1(a(x_1)-a(y_1)) \geq 1\cdot (a(x_1)-a(x_1-1)) 
  \geq \Big(1-\frac 1C \Big) a(x_1)
\]
(since $y\leq x_1-1$), and an upper bound 
\[
n_2(a(x_2)-a(y_2))   \leq M a(x_2) =a(x_1)M \frac{a(x_2)}{a(x_2+(x_1-x_2))} \leq \frac{M}{C^{x_1-x_2}}a(x_1) 
\]
since $a(x+h)\geq C^h a(x)$ for $h\geq 1$. Hence
\[
n_1(a(x_1)-a(y_1))
- n_2(a(x_2)-a(y_2)) 
\geq \Big(1-\frac 1C\Big) a(x_1) - \frac{M}{C^{x_1-x_2}}a(x_1) 
\]
Assuming that
$ x_1-x_2>2\log_C M $
gives in particular 
\[
1-\frac 1C -\frac{M}{C^{x_1-x_2}} >1-\frac 1C-\frac 1M 
> \frac 12 \Big(1-\frac 1C\Big)>0
\]
for sufficiently large $N$.
The condition \eqref{lacunary ineq2}   now forces
\[
\frac 12(1-\frac 1C)<  (1-\frac 1C)   - \frac{M}{C^{x_1-x_2}}  \leq \frac K{a(x_1)} \ll \frac K{C^{x_1}}
\]
which forces $x_1 \ll \log_C K\leq \varepsilon \log M$, which we assumed was not the case. Thus we may assume that $x_1-x_2>2\log_C M$, which forces 
$x_2\geq 2\log_CM$ 
since $x_1>4\log_C M$. 

Now fix $x_2$ as well; 
then $n_2$ will be determined by $y_2$, because 
\[
n_2 = \frac{n_1 (a(x_1)-a(y_1))}{a(x_2)-a(y_2)} + O\Big(\frac K{ a(x_2)-a(y_2)}\Big)
\]
and since $a(y)$ is lacunary, $K/(a(x_2)-a(y_2)) =o(1)$ 
if $ y_2 \geq \log_C N$, because 
\[
%\begin{split}
a(x_2)-a(y_2)  \geq a(x_2)-a(x_2-1)\geq a(x_2)(1-\frac 1C) \gg C^{x_2}\geq M^2
%\\
%& =a(x_1-(x_2-x_1))(1-\frac 1C) 
%\gg \frac{a(x_1)}{C^{x_1-x_2}}\geq \frac{a(x_1)}{M^2}
% \end{split}
\]
since $x_2>2\log_CM$. 

So we will be done if we show that there is at most one choice of $y_2$ such that  $x_2-y_2>2\log_C M$. Indeed, if there are two pairs $(y_2,n_2)$ and  $(y_2',n_2')$ 
for which \eqref{lacunary ineq2} holds (recall all other variables are now fixed), 
with $x_2-y_2>2\log_C M$, $x_2-y_2'>2\log_C M$, then since
\[
a(y_2) \leq \frac{a(x_2)}{C^{x_2-y_2}}\leq \frac{a(x_2)}{M^2}
\]
we find that   \eqref{lacunary ineq2} implies 
\[
\begin{split}
n_1(a(x_1)-a(y_1))  &= n_2(a(x_2)-a(y_2)) +O(K) 
\\
&= n_2 a(x_2)\Big( 1+ \frac{a(y_2)}{a(x_2)} 
+O\Big(\frac K{n_2a(x_2)}\Big)\Big) 
\\
&= n_2 a(x_2)\Big( 1+ O\Big(\frac{K}{M^2}\Big)\Big)
\end{split}
\]
since $n_2a(x_2)\geq a(x_2)\geq C^{x_2} \gg M^2$ if $x_2\geq 2\log_C M$, and $a(x_2)/a(y_2)\geq C^{x_2-y_2}\gg M^2$.  
If $n_2',y_2'$ is another such pair then we also find
\[
n_1(a(x_1)-a(y_1)) = n'_2 a(x_2)
\Big( 1+ O \Big(\frac{K}{M^2} \Big)\Big)
\]
so that 
\[
n_2 a(x_2)\Big( 1+ O\Big( \frac{K}{M^2} \Big)\Big)
= n'_2 a(x_2)\Big( 1+ O\Big( \frac{K}{M^2}\Big)\Big)
\]
which gives
\[
n_2'=n_2 \Big( 1+ O\Big(\frac{K}{M^2}\Big)\Big) 
= n_2 + O\Big( \frac{K}{M}\Big)
\]
since $n_2\leq M$. Thus for $M\gg N^\gamma$, 
while $K\ll N^\varepsilon = o(M)$, we obtain $n_2'=n_2$.
\end{proof}


\begin{thebibliography}{99}
   \bibitem{AB}
 C. Aistleitner and  S. Baker.	
 On the pair correlations of powers of real numbers.
 arXiv:1910.01437 [math.NT]
 	
\bibitem{ALL}
 	 C. Aistleitner, G. Larcher and M. Lewko. Additive energy and the Hausdorff dimension of the exceptional set in metric pair correlation problems. With an appendix by Jean Bourgain. Israel J. Math. 222 (2017), no. 1, 463--485.

\bibitem{BCC}
V. Becher, O. Carton and I. Cunningham.
Low discrepancy sequences failing Poissonian pair correlations. Arch. Math. 113 (2019), no. 2, 169--178.



\bibitem{BCGW} 
 T.F. Bloom, S. Chow, A. Gafni and A. Walker. 
 Additive energy and the metric Poissonian property. Mathematika 64 (2018), no. 3, 679--700.
 	
  \bibitem{CLZ}
   S. Chaubey, M. Lanius and A. Zaharescu. 
   Pair correlation of fractional parts derived from rational valued sequences. J. Number Theory 151 (2015), 147--158.
   
   \bibitem{EMV} 
D. El-Baz, J. Marklof and I. Vinogradov. 
The two-point correlation function of the fractional parts of $\sqrt{n}$ is Poisson.
Proc. Amer. Math. Soc. 143 (7), 2815--2828.

  \bibitem{KN}
   L. Kuipers and H. Niederreiter. Uniform distribution of sequences. Pure and Applied Mathematics. Wiley-Interscience [John Wiley \& Sons], New York-London-Sydney, 1974.
   
\bibitem{LT}
T. Lachmann and N. Technau. On exceptional sets in the metric Poissonian pair correlations problem. Monatsh. Math. 189 (2019), no. 1, 137--156.


\bibitem{LarcherStockinger}
 G. Larcher and W. Stockinger.
 Some negative results related to Poissonian pair correlation problems
 To appear in: Disc. Math., 2019. doi.org/10.1016/j.disc.2019.111656
 
 \bibitem{PS}
 I. Pirsic and W. Stockinger. The Champernowne constant is not Poissonian. Funct. Approx. Comment. Math. 60 (2019), no. 2, 253--262. 
 

\bibitem{RudnickSarnak}
  Z. Rudnick and P. Sarnak. The Pair Correlation Function of 
  Fractional Parts of Polynomials. Comm. Math. Phys. 194, (1998). 61 -- 70 .
  
  \bibitem{RZ pc}
Z. Rudnick and A. Zaharescu. A metric result on the pair correlation of fractional parts of sequences, Acta Arith. 89 (1999), 283--293.

\bibitem{Walker}
 A. Walker. The primes are not metric Poissonian. Mathematika 64 (2018), no. 1, 230--236.

\end{thebibliography}
\end{document}